\title{Algebra, Topology and Algebraic Topology of 3D Ideal Fluids}
\author{Dennis Sullivan}
\theoremstyle{plain}
\newtheorem{prop}[subsection]{Proposition}
\newtheorem{cor}[subsection]{Corollary}
\newtheorem{conc}[subsection]{Conclusion}
\theoremstyle{remark}
\theoremstyle{definition}
\newtheorem{defn}[subsection]{Definition}
\newtheorem{exm}[subsection]{Example}
\numberwithin{equation}{section}
\newcommand{\Z}{{\mathbb Z}}
\begin{document}

\begin{abstract}  There is a remarkable and canonical problem in 3D geometry and topology: To understand existing  models of 3D fluid motion
or to create new ones that may be useful. We discuss from an algebraic viewpoint the PDE called Euler's equation for incompressible frictionless fluid motion.  
In part I we define a ``finite dimensional 3D fluid algebra,'' write its Euler equation and derive properties related to energy, helicity,  transport of vorticity and linking that characterize this equation. This is directly motivated by the infinite dimensional fluid algebra associated to a closed riemannian three manifold whose Euler equation as defined above is the  Euler PDE of fluid motion.
 The classical infinite dimensional fluid algebra satisfies an additional identity related to the Jacobi identity for the lie bracket of vector fields.
In part II we discuss informally how this Jacobi identity can be reestablished in finite dimensional approximations as a Lie infinity algebra. The main point of a developed version of this theory would be a coherence between various levels of approximation. It is hoped that a better understanding of the meaning of the Euler equation in terms of such infinity structures would yield algorithms of computation that work well for conceptual reasons.
\end{abstract}
\maketitle
\section{Algebra and Topology of Ideal Fluids}

A finite dimensional $3D$-fluid algebra is a finite dimensional vector space $V$ provided with three structures:\begin{itemize}
\item[1)] an alternating  trilinear form $\{\, ,\, ,\, \}$ on $V$, called the triple intersection form.
\item[2)] a symmetric nondegenerate bilinear form $\langle\, ,\, \rangle$ on $V$, called the vorticity linking form.
\item[3)] a positive definite inner product $(\, ,\, )$ on $V$, called the metric.
\end{itemize}

If $M$ is a $3D$  closed oriented Riemannian manifold there is a classical example of a fluid algebra which is infinite dimensional and which is constructed inside the differential forms:\begin{itemize}
\item  $V$ consists of the coexact one forms, the image of two forms by the operator $*d*$ where $*$ is the Hodge star operator of the metric. Under the correspondence between one forms and vector fields given by the metric, elements in $V$ correspond to volume preserving vector fields which have flux or net flow  zero across any closed hypersurface.
\item The vorticity linking form on $V$ is defined by 
setting \[ \langle a,b\rangle=\int_M a\wedge db.\]  This is equal to $\int_M
(da)\wedge b$, depends only on $da$ and $db$, and
may be construed as a linking number
of the two one-dimensional transversally measured foliations defined by the kernels of $da$ and $db$ and transversally measured by the two forms $da$ and $db$ (see Arnold-Khesin\cite{AK} and Sullivan\cite{S}). Here $da$ and $db$ determine the vorticities of the vector fields corresponding to $a$ and to $b$. $da$ and $db$ may be approximated, in the sense of integrating against a smooth test form, by weighted sums of closed curves which  bound weighted sums of surfaces approximating $a$ and $b$. Recall that linking numbers are defined by intersecting one  set of curves with  surfaces bounding a second set of curves. Thus the integral approximately  computes the total linking of the weighted family of curves approximating $da$ with those approximating $db$.  

One can see from the Hodge decomposition that the vorticity linking form is nondegenerate on coexact one-forms.  
\item  The triple intersection form $\{a,b,c\}$ is the integral of $a$ wedge $b$ wedge $c$ over $M$, which may be construed as a triple intersection of the surfaces with boundary approximating $a$, $b$, and $c$.\end{itemize}

The Euler evolution in $V$ describing ideal incompressible frictionless fluid motion in $M$ is an $ODE$ whose solutions may be described in words as follows: an isotopy from the identity is a fluid motion iff the path in $V$ corresponding to the instantaneous velocity of the motion satisfies: the vorticity at time $t$ is the two form transported by the motion from the vorticity two form at time zero.

We can write out this Euler $ODE$ just using the elements of a fluid algebra.

 Namely, the right hand side of the evolution $ODE$ for $X(t)$ is described by its inner product with any vector $Z$ as follows:  $\left(\frac{dX}{dt},Z\right)$ = $\{X,DX,Z\}$ where $D$ is the operator on $V$ defined by $(DX,Y)$ = $\langle X, Y\rangle$.
$D$ is called the curl operator on the  elements of a fluid algebra. Note that $(DX,Y)$ = $(X,DY)$ since $\langle \, ,\, \rangle$ is symmetric by assumption.

\begin{prop} The ``energy'' =  $(X,X)$ and  the ``helicity'' =  $(X,DX)$ are each preserved by the evolution in a  fluid algebra.\end{prop}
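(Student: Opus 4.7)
The plan is to differentiate each conserved quantity along the flow, use the defining formula for $\frac{dX}{dt}$ to convert the resulting inner products into values of the trilinear form $\{\,,\,,\,\}$, and then invoke its alternating property to show the derivative vanishes.

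For the energy, I would compute $\frac{d}{dt}(X,X) = 2\bigl(\frac{dX}{dt},X\bigr)$ using symmetry of the metric. The right-hand side, by definition of the Euler evolution with test vector $Z = X$, equals $2\{X,DX,X\}$, which is zero because $\{\,,\,,\,\}$ is alternating and the first and third slots agree.

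For the helicity, the derivative $\frac{d}{dt}(X,DX)$ splits as $\bigl(\frac{dX}{dt},DX\bigr) + \bigl(X,D\frac{dX}{dt}\bigr)$. The second term needs the observation, recorded just before the proposition, that $D$ is self-adjoint with respect to $(\,,\,)$ because $\langle\,,\,\rangle$ is symmetric; this lets me rewrite $\bigl(X,D\frac{dX}{dt}\bigr) = \bigl(DX,\frac{dX}{dt}\bigr)$, so the two terms combine into $2\bigl(\frac{dX}{dt},DX\bigr)$. Applying the Euler ODE with $Z = DX$ converts this into $2\{X,DX,DX\}$, which vanishes by the alternating property (second and third slots agree).

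There is no real obstacle here: the argument is a short calculation, and the only subtlety is being sure to use the self-adjointness of $D$ for the helicity step — without that, the two pieces of $\frac{d}{dt}(X,DX)$ would not combine into a single value of $\{\,,\,,\,\}$ and the cancellation from alternation would not directly apply. Everything else is forced by the definitions.
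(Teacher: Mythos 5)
Your proposal is correct and follows essentially the same route as the paper: differentiate, apply the Euler ODE with test vectors $X$ and $DX$, and invoke the alternating property of $\{\,,\,,\,\}$. The only difference is that you spell out the self-adjointness of $D$ (which the paper notes just before the proposition) to justify combining the two terms of $\frac{d}{dt}(X,DX)$, a step the paper's proof compresses into writing the derivative directly as $2\bigl(\frac{dX}{dt},DX\bigr)$.
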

\begin{proof} \begin{itemize} \item[1)] $\frac{d}{dt}\mbox{(energy)}=2\left(\frac{dX}{dt},X\right) = 2\{X,DX,X\} =0$ by the alternating property of $\{\,  , \, , \,\}$.
\item[2)] $\frac{d}{dt}\mbox{(helicity)}=2\left(\frac{dX}{dt},DX\right)=2\{X,DX,DX\}=0$  again by the alternating property of $\{\, ,\, ,\, \}$.\end{itemize}
\end{proof}
\begin{cor}For a finite dimensional fluid algebra, there is no finite time blowup and the flow stays on the intersection of the energy sphere with the helicity level set.
\end{cor}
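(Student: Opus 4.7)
The plan is to combine the two conservation laws from the preceding proposition with the standard fact that smooth ODEs with trajectories confined to a compact set exist for all time.

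First I would observe that the right-hand side of the Euler ODE, namely the unique vector $F(X) \in V$ satisfying $(F(X), Z) = \{X, DX, Z\}$ for all $Z$, is a polynomial (in fact quadratic) function of $X$, since $D$ is a fixed linear operator and $\{\,,\,,\,\}$ is trilinear. Hence $F\colon V \to V$ is smooth, and local existence and uniqueness of solutions is immediate from the Picard--Lindel\"of theorem.

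Next I would invoke the proposition: any solution $X(t)$ satisfies $(X(t),X(t)) = (X(0),X(0))$ and $(X(t),DX(t)) = (X(0),DX(0))$ for as long as it exists. Because $V$ is finite dimensional and $(\,,\,)$ is positive definite, the energy level set
\[
S_c = \{Y \in V : (Y,Y) = c\}
\]
is a compact (topological) sphere for every $c > 0$ (and a single point for $c = 0$). The trajectory is therefore confined to the compact set $S_{c_0} \cap \{(Y, DY) = h_0\}$, where $c_0$ and $h_0$ are the initial energy and helicity.

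Finally, the standard ODE extension principle says that a maximal solution of a smooth ODE on $V$ either exists for all time or leaves every compact set as $t$ approaches the endpoint of its interval of existence. Since our trajectory is trapped in a compact set, it cannot escape, so the solution extends to all $t \in \mathbb{R}$. This completes the proof; no step presents a real obstacle, the content being entirely a packaging of the conservation laws with the finite dimensionality of $V$.
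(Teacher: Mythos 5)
Your proof is correct and follows exactly the route the paper has in mind: the paper leaves the Corollary without an explicit argument, but it is understood (and later alluded to via the remark that compactness is what fails in infinite dimensions) that conservation of energy confines the trajectory to a compact sphere, so the quadratic, hence smooth, right-hand side cannot produce finite-time blowup. Your packaging via Picard--Lindel\"of and the escape-from-compact-sets lemma is precisely this intended argument made explicit.
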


  Notice in this classical three manifold case the alternating trilinear form on all one-forms and the  bilinear form on all one-forms  are purely topological and don't depend on the metric. The metric is needed to define the subspace of coexact forms inside all one-forms  to which one restricts the trilinear form, the bilinear form and the inner product to obtain the infinite dimensional fluid algebra.

Also notice that even though the Proposition follows for  this classical Euler case by the same short proof, the Corollary of course does not follow because of the non compactness of the infinite dimensional sphere.
The truth or falsity of infinite time existence of smooth solutions of the Euler evolution for arbitrary smooth initial conditions is a celebrated open problem.

Here is another family of examples:
\begin{exm} Given a finite dimensional Lie algebra $[\, ,\, ]$ with a nondegenerate invariant symmetric pairing $\langle\, ,\, \rangle$ define a fluid algebra by:\begin{itemize}
\item for  the linking form take $\langle\, ,\, \rangle$.
\item  for  the intersection form take $\{X,Y,Z\}$ =$\langle [X,Y] , Z \rangle$ and 
\item for  the metric take any positive definite inner product.\end{itemize}
Euler flows for these examples were described differently by Arnold and
Khesin \cite{AK}.  The interested reader may verify that their evolution equation (see \cite[Theorem I.4.9]{AK}) is related to ours by a linear coordinate change.\end{exm} Notice:\begin{itemize}
\item[1)]For a general fluid algebra one may reverse the above formula to define a bracket by the formula $\langle [X,Y] , Z \rangle$= $\{X,Y,Z\}$. If the Jacobi identity were satisfied for this bracket the fluid algebra would arise from the Lie algebra example.
\item[2)] In the infinite dimensional example related to $3D$ manifolds and the Euler ODE this Jacobi identity is satisfied and corresponds to the Lie  algebra of  volume preserving flux zero vector fields.
\item[3)] In order to have models which satisfy the invariance of energy, helicity, and more generally circulation (see below) and the transport property of vorticity (see below) this Jacobi relation is not required. The significance of the Jacobi relation
 needs exploring (see Part II). One interpretation of Jacobi is the following: if we extend the transport (defined below) to higher tensors by the derivation property then Jacobi is equivalent to the statement that transport fixes the tensor defining transport itself.
\end{itemize}
\begin{defn}{(Velocity and Vorticity)} We think of elements in $V$ as analogous to velocity and vorticity fields of the ideal fluid. The  vorticity of a velocity  is obtained by applying the curl operator $D$ to the velocity.\end{defn}

\begin{prop} If $X(t)$ satisfies the Euler equation of the fluid algebra \[ \left(\frac{dX(t)}{dt},Y\right)=  \{X,DX,Y\},\] then the vorticity $DX(t)=Y(t)$ satisfies \[ \left(\frac{dY(t)}{dt},Z\right)=\{D'Y,Y,DZ\} \]  where $D'$ is the inverse of $D$.\end{prop}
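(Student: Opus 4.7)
The plan is to differentiate the relation $Y(t)=DX(t)$, use the self-adjointness of $D$ noted after its definition, and then substitute back into the Euler equation. Since $D$ is a fixed operator on $V$ (independent of time), differentiation gives $\frac{dY}{dt}=D\frac{dX}{dt}$.

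Next I would compute
\[ \left(\frac{dY}{dt},Z\right) = \left(D\frac{dX}{dt},Z\right) = \left(\frac{dX}{dt},DZ\right),\]
where the second equality uses $(DX,Y)=(X,DY)$, which was observed to follow from the symmetry of $\langle\,,\,\rangle$. Now the Euler equation holds for every test vector in $V$, so I can apply it with $Y$ replaced by $DZ$:
\[ \left(\frac{dX}{dt},DZ\right) = \{X,DX,DZ\}.\]
Finally, from $Y=DX$ we have $DX=Y$ and $X=D'Y$ (the operator $D$ is invertible because $\langle\,,\,\rangle$ is nondegenerate and $(\,,\,)$ is a nondegenerate pairing, so the identity $(DX,Y)=\langle X,Y\rangle$ forces $D$ to be an isomorphism). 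Substituting gives $\{X,DX,DZ\}=\{D'Y,Y,DZ\}$, which is the asserted vorticity equation.

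The argument is essentially a one-line chain of substitutions, so there is no serious obstacle; the only conceptual point worth noting explicitly is the invertibility of $D$, which is what allows $D'$ to be defined in the first place and which one should record before writing $X=D'Y$. Everything else is just the self-adjointness of $D$ and the fact that the Euler equation is an identity valid for all test vectors $Z\in V$, not only for the specific vector $Y$ in which it was first stated.
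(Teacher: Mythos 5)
Your proof is correct and follows exactly the paper's own argument: differentiate $Y=DX$, use the self-adjointness $(DX,Y)=(X,DY)$ to move $D$ onto the test vector, apply the Euler equation with test vector $DZ$, and substitute $X=D'Y$. The remark on why $D$ is invertible is a sensible addition but does not change the route.
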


\begin{proof} $\frac{d}{dt}DX(t)=D\left(\frac{d}{dt}X(t)\right)$. So \begin{align*}\left(\frac{d}{dt}Y(t),Z\right)&= \left(D\left(\frac{d}{dt}X(t)\right),Z\right)= \left(\frac{d}{dt}X(t),DZ\right)\\&=\{X,DX,DZ\} = \{X,Y,DZ\} = \{D'Y,Y,DZ\}.\end{align*}\end{proof}

\begin{defn}{(Transport)} Define the infinitesimal transport of $Z$ by $X$, $T(X,Z)$, by the condition that its inner product with arbitrary $W$ is given by
 $(T(X,Z),W)=\{X,Z,DW\}$.\end{defn}  Note $T(X,Z)$ = $-T(Z,X)$, but Jacobi is not assured for $T(X,Y)$.
 The transport is meant to model for volume preserving vector fields the Lie derivative action, which by the way is canonically equivalent to the the Lie derivative action on closed two forms.

\begin{cor} If $X(t)$ satisfies  the Euler equation of a fluid algebra then the vorticity $Y(t) =\mbox{curl}X(t) =DX(t)$ satisfies $\frac{dY(t)}{dt} = T(X(t),Y(t))$, namely the vorticity is transported from one time to another by the motion.\end{cor}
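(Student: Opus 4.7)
The plan is to observe that this corollary is essentially a bookkeeping consequence of the previous proposition combined with the definition of transport, with nondegeneracy of the inner product used to convert an identity of inner products into an identity of vectors.

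First I would invoke the previous proposition, which gives
\[
\left(\frac{dY(t)}{dt},Z\right)=\{D'Y,Y,DZ\}
\]
for every $Z\in V$. Since $Y=DX$ by definition, the operator $D'$ (the inverse of $D$) sends $Y$ back to $X$, so the right-hand side simplifies to $\{X,Y,DZ\}$.

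Next I would compare this to the defining relation of the transport operator, which states $(T(X,Y),Z)=\{X,Y,DZ\}$ for every $Z$. Combining the two identities gives
\[
\left(\frac{dY(t)}{dt},Z\right)=(T(X,Y),Z)\qquad\text{for all }Z\in V.
\]
Because the inner product $(\,,\,)$ is positive definite, hence nondegenerate, I can conclude that $\frac{dY(t)}{dt}=T(X(t),Y(t))$.

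There is no real obstacle here; the only thing to be careful about is the order of arguments in $T$ (the definition makes $T$ alternating in its two arguments, so one must be consistent with which slot plays the role of the velocity and which of the transported quantity), and the fact that $D'Y=X$ requires $D$ to be invertible on $V$, which is implicit in the standing assumption that the vorticity linking form is nondegenerate, so that $D$ is an isomorphism of $V$.
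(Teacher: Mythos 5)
Your proof is correct and follows essentially the same route as the paper: combine the previous proposition (with $D'Y=X$) and the defining relation of transport so that both sides pair identically with every $Z$, then conclude by nondegeneracy of the inner product. The paper leaves the nondegeneracy step implicit, but otherwise the arguments coincide.
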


\begin{proof} Since $Y=DX$, by the proposition and the definition of transport the inner product of each with $Z$ equals $\{X,DX,DZ\}$.\end{proof}

Note: This definition of transport  may be derived in the classical case from the expression of the  Lie bracket of  two volume preserving vector fields $[V,V']$ in terms of the corresponding one-forms by the formula  $[V,V']=*d*(V\wedge V')$ where $*d*$ is the adjoint of $d$.

\begin{cor}{(Invariance of circulation)}\label{invnce} If the vorticity  $DZ(t)$ of a time dependent field $Z(t)$ satisfies $\frac{d}{dt}(DZ(t)) = T(X(t),DZ(t))$, then the linking number of $DZ(t)$ and $DX(t)$ stays constant.\end{cor}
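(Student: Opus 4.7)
The plan is to translate the ``linking number of $DZ(t)$ and $DX(t)$'' into the quantity $\langle Z(t),X(t)\rangle$, differentiate it in time using the product rule, and then watch everything collapse by the alternating property of $\{\,,\,,\,\}$. The identification $\langle Z,X\rangle=(DZ,X)=(Z,DX)$ is immediate from the definition of the curl operator $D$ and uses the symmetry of $\langle\,,\,\rangle$; and by the discussion of the vorticity linking form the pairing $\langle Z,X\rangle$ is precisely what computes the linking of the vorticities $DZ$ and $DX$. So the target is $\tfrac{d}{dt}(DZ(t),X(t))=0$.

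Next I would write
\[
\tfrac{d}{dt}(DZ,X)=\bigl(\tfrac{d}{dt}DZ,X\bigr)+\bigl(DZ,\tfrac{dX}{dt}\bigr),
\]
and handle each term by unwrapping its defining formula. The hypothesis $\tfrac{d}{dt}(DZ)=T(X,DZ)$ together with the definition of transport gives
\[
\bigl(T(X,DZ),X\bigr)=\{X,DZ,DX\}.
\]
For the second term, the Euler equation for $X(t)$, read against the vector $DZ$, gives
\[
\bigl(\tfrac{dX}{dt},DZ\bigr)=\{X,DX,DZ\}.
\]

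Finally I would invoke the alternating property of the triple intersection form: swapping the second and third arguments flips the sign, so $\{X,DZ,DX\}=-\{X,DX,DZ\}$, and the two contributions cancel. The only potential obstacle is bookkeeping — specifically, making sure the linking is correctly identified with $\langle Z,X\rangle$ (rather than $\langle DZ,DX\rangle$, which would be a different object) and keeping the transport sign convention consistent with $T(X,Z)=-T(Z,X)$; once those are nailed down the computation is mechanical. Notably, no Jacobi-type identity is needed, in line with the remark that the conservation laws of the Euler flow do not require Jacobi.
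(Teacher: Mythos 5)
Your proposal is correct and follows the same route as the paper: identify the linking with $(DZ,X)=\langle Z,X\rangle$, differentiate, evaluate one term by the Euler equation as $\{X,DX,DZ\}$ and the other by the transport definition as $(T(X,DZ),X)=\{X,DZ,DX\}$, and cancel using the alternating property. Nothing is missing.
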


Here, consistently with the situation discussed in the classical example, the linking number of $DZ(t)$ and $DX(t)$ refers to the value $\langle X,Z\rangle$ of the vorticity linking form on the fields $X$ and $Z$.

\begin{proof} The linking number of $DZ$ and $DX$ is equal to $(X,DZ)$. So \begin{align*}\frac{d}{dt}(X,DZ)&= \left(\frac{d}{dt}X(t),DZ(t)\right) + \left(X,\frac{d}{dt}DZ(t)\right) \\&= \{X,DX,DZ\} + (T(X,DZ),X)
 \\& =\{X,DX,DZ\} + \{X,DZ,DX\}\end{align*}  which is zero by the alternating property.\end{proof}

\begin{conc} The Euler ODE for velocity associated to a fluid algebra is the only ODE whose evolution keeps constant the linking of the transported vorticity of a general field with the evolving vorticity of the velocity solution of the ODE (as expressed in Corollary \ref{invnce}).\end{conc}
\begin{proof}
The  calculation of the proof of Corollary \ref{invnce} shows cancellation takes place iff the evolution of the velocity $X(t)$ satisfies 
$\left(\frac{d}{dt}X(t),DZ\right) =\{X,DX,DZ\}$. Since $D$ is invertible this is equivalent to the definition of the Euler ODE.
\end{proof}

\section{Algebraic Topology of Ideal Fluids}

In the classical example of ideal fluid motion on a metric three manifold $M$ without boundary, the fluid algebra is embedded in the differential algebra of all differential forms. So we are adding to the fluid algebra non constant functions, non constant volume
 forms, more one-forms and more two forms. New operators and operations appear: exterior derivative, the wedge product, the integral of three forms over the manifold, the  Hodge star operator and the Hodge decomposition with the associated projections:
   all forms = exact forms + coexact forms + harmonic forms.  (exact means image $d$ and coexact means image of $*d*$).
The fluid algebra in these terms is made from the subspace of coexact forms in degree one, the curl operator obtained by doing $d$ then star, the alternating form and metric as described above using wedge, star and the integral.
Alas, all of this is infinite dimensional so compactness and the easy proof above of long time existence disappears.

However there are many finite dimensional models of the algebraic topology residing in the above structure.
Two types are:\begin{itemize}
\item[I] (grid type). Divide the manifold into cells and average the forms over oriented faces of the correct dimension. By Stokes' theorem
one gets a map of chain complexes $I$:(all smooth forms with $d$) $\rightarrow$ (cellular cochains with $d$) which induces an isomorphism on cohomology. There are maps of chain complexes in the opposite direction using the heat flow and dual cells which are inverse to $I$ up to chain homotopy and invariant under star (from class lectures and an unpublished manuscript).

\item[II] (eigenvalue type). Consider the eigendecomposition of all $p$ forms for the laplacian operator $(-1)^{p-1}(*d*d- d*d*)$ and project onto finite dimensional pieces by putting bounds on the eigenvalues. These are invariant under $d$ and star as well and induce isomorphisms on cohomology. For the flat torus these are used in numerical calculation for fluids.\end{itemize}

A third type which is due to Whitney, is very elegant and useful for algebraic topology (see \cite{S3}) but it is not invariant under star for a fundamental reason which to my knowledge has never been corrected. So we do not consider Whitney forms here.

The first two finite dimensional approximations have everything they need for defining the fluid algebra approximating the classical fluid algebra for the classical Euler evolution  EXCEPT the wedge product. This lack can be corrected by forming the wedge product on the finite dimensional image inside forms and then integrating or projecting back to the finite dimensional model.  This is how we multiply numbers on a computer using finite parts of the decimal expansion and how numerical computations for fluids are performed.
In each case the multiplication  obtained  is appropriately commutative  but NOT associative.

Now nontrivial ideas of algebraic topology enter. 
First recall  that a  chain mapping of chain complexes
of vector spaces (or free $\Z$-modules)  inducing a homology isomorphism 
has an inverse up to chain homotopy.  Such inverses and the chain homotopies can be used to transport algebraic structures (up to homotopy) between chain complexes that are  very different as vector spaces, for example the infinite dimensional deRham complex and the finite dimensional models.
The associator of this finite dimensional wedge product $((a \wedge b) \wedge c)$ - $(a \wedge (b \wedge c))$ is a three to one operation which is a mapping of chain complexes which commutes with the the natural differentials. Moreover because of the homology isomorphism above this associator is the commutator with $d$ of a correcting three to one operation of degree one less which we denote $\wedge_1$. Now we continue using ideas of Stasheff's Princeton Thesis (1959). Consider the five ways to associate four entities. These arrange naturally at the vertices of a pentagon. We  sum the corrections $\wedge_1$ on each  edge  combined with $\wedge$
to build a four to one operation of degree $-1$ which again is a cycle in that it commutes with $d$ extended to multivariables by the Leibniz rule. By the homology isomorphism above there is a degree $-2$ four to one operation $\wedge_2$ which fills in the cycle corresponding to the boundary of the pentagon, i.e. one whose commutator with $d$ is the four to one operation of degree $-1$ obtained by going around the pentagon. This process continues indefinitely producing $n$ to $1$  correction operations of degree $-n$ whose commutator with $d$ is an appropriate more and more complex looking formula in the inductively constructed corrections.

There are two versions of this: the original Stasheff one is very beautiful. The complex looking formulae are nothing more than the combinatorics of the moduli spaces of complete hyperbolic surfaces with geodesic boundary boundedly related to  the two disk with three or more punctures on the boundary, then naturally compactified by geometric limits. This is the model also controlling genus zero  part of open string theories. The Stasheff  polyhedra can also be described as a moduli space of planar rooted trees.

The second way remembers the commutativity and seems more appropriate here. It may be modeled on  the moduli spaces of
rooted trees (in space) whose leaves are labeled and whose interior edges are painted black or white. This model is a picture of what is called the bar cobar construction for algebras over an operad (see Vallette\cite{V} and Wilson\cite{W}).

Interestingly enough there is a specific procedure for computing these corrections by placing the chain homotopies mentioned above on the interior edges of the trees and using the wedge at the vertices. This procedure is identical to the tree part of the Feynman diagram algorithm in perturbative Chern-Simons quantum field theory where the ``propagator'' there is the chain homotopy here.

After doing this work we obtain the  derived or ``infinity''  version of the graded commutative wedge product compressed onto the finite dimensional approximating models.

The same ideas may be applied to the bracket or transport discussed above which satisfies Jacobi in the infinite dimensional model. The Jacobi identity may be encoded in the the algebraic statement that on forms the wedge product and the adjoint of $d$ = $(-1)^p *d*$ satisfy the following identity in words: the deviation of $*d*$ from being a derivation of $\wedge$ is, as a two variable operator,  itself a derivation in each variable. Adding the equation that $*d**d*$ = $0$ yields formally that the two variable operation satisfies Jacobi. This Lie bracket on forms becomes via the metric isomorphism the usual Lie bracket of vector fields extended by Leibniz to all multivector fields.  (This is referred to as the Schouten-Nijenhuis bracket.)
This relationship between $*d*$ and $\wedge$  means we have a  so called Batalin-Vilkovisky algebra or briefly a $BV$ algebra.
 The $BV$ formalism in perturbative quantum field theory due to Batalin-Vilkovisky is  perhaps the most natural for mathematicians (see Costello \cite{C}).
The $BV$ structure $(*d*,\wedge)$ may also be
compressed to a new structure of BV algebra up to homotopy into our finite dimensional models. The bracket so obtained is called a Lie infinity structure.

So now we have a commutative infinity structure related to $d$ and a  Lie infinity structure related to $*d*$   whose leading term is the transport above.

Before rushing off to make models that would be used for fluid simulation these ideas need I think to be completed in the following way:\begin{itemize}
\item[1)] describe the complete structure used above $d$,star, $\wedge$ and the integral as an algebraic structure. Then develop the diagrammatic compression algorithms for this structure which yield the derived or infinity version of the structure on the finite dimensional approximations. To my knowledge the current abstract homotopical algebra just falls short of this task.  General multilinear operations with outputs can be treated as in \cite{S2}, but pairings are not treated there or anywhere else to my knowledge (although Kevin Costello's work on renormalization and perturbative quantum field theory comes close \cite{C}).
\item[2)] understand the Euler evolution as a functorial construction on the derived or infinity version of this algebraic structure. The efforts of Arnold and Khesin and others  are a beginning but to my knowledge this goal is not yet achieved (although describing the flow by an action principle may lead to such a functorial principle).
\end{itemize}
If these two tasks are completed, we will have conceptually natural effective theories of fluid motion at every scale that fit together in an appropriate sense. There will be, by definition almost, natural algorithms for fluid computation based on the corrections that emerge from the compression of the algebraic structure into finite dimensions. These may play a role in proving long term existence of the classical ODE if that long term existence is true. If it is not true, this may also be revealed in these models which work at every scale. In either case we will have natural algorithms for computations of real fluids at every scale which of course do have long time existence and which are potentially observable at extremely small scales.

\end{document}